\newtheorem{theorem}{Theorem}[section]
\newtheorem{prop}[theorem]{Proposition}
\newtheorem{lemma}[theorem]{Lemma}
\newcommand\beq{\begin{equation}}
\newcommand\eeq{\end{equation}}
\newcommand\bce{\begin{center}}
\newcommand\ece{\end{center}}
\newcommand\bea{\begin{eqnarray}}
\newcommand\eea{\end{eqnarray}}
\newcommand\bean{\begin{eqnarray*}}
\newcommand\eean{\end{eqnarray*}}
\newcommand\bmt{\begin{multline*}}
\newcommand\emt{\end{multline*}}
\newcommand\ben{\begin{enumerate}}
\newcommand\een{\end{enumerate}}
\newcommand\bit{\begin{itemize}}
\newcommand\eit{\end{itemize}}
\newcommand\brr{\begin{array}}
\newcommand\err{\end{array}}
\newcommand\bt{\begin{tabular}}
\newcommand\et{\end{tabular}}
\newcommand\nin{\noindent}
\newcommand\bs{\bigskip}
\newcommand\ms{\medskip}
\newcommand\ol{\overline}
\renewcommand\S{\mathcal S}
\newcommand\UD{\Lambda}
\newcommand\CUD{\Delta}
\newcommand\GCUD{\Theta}
\newcommand\fp{\operatorname{fp}}
\newcommand\exc{\operatorname{exc}}
\newcommand\ud{\operatorname{ud}}
\newcommand\nud{\operatorname{nud}}
\newcommand\lrm{\operatorname{lrm}}
\newcommand\extr{\operatorname{extr}}
\newcommand\st{\operatorname{mm}}
\newcommand\bij{\phi}
\newcommand\Jbij{\varphi}
\title{Cycle up-down permutations}
\author{Emeric Deutsch}
\author{Sergi Elizalde}
\begin{document}
\maketitle
\begin{abstract}
A permutation is defined to be {\em cycle-up-down} if it is a product of cycles that, when written starting with their smallest element,
have an up-down pattern. We prove bijectively and analytically that these permutations are enumerated by the Euler numbers,
and we study the distribution of some statistics on them, as well as on up-down permutations, on all permutations, and on a generalization
of cycle-up-down permutations.
The statistics include the number of cycles of even and odd length, the number of left-to-right minima, and the number of extreme elements.
\end{abstract}

\section{Introduction}\label{sec:intro}

Let $[n]=\{1,2,\dots,n\}$, and let $\S_n$ be the set of permutations of $[n]$.
A permutation $\pi$ of $[n]$ can be written in one-line notation as $\pi=\pi_1\pi_2\dots\pi_n$
or as a product of cycles as
$$\pi=(a_{11},a_{12},\dots)(a_{21},a_{22},\dots)(a_{31},a_{32},\dots)\dots$$
We will write commas in the cycle notation but not in the one-line notation, in order to distinguish them.
A cycle is said to be in {\em standard form} if its smallest element is in first position.
Every permutation $\pi$ has a unique expression as
a product of cycles in standard form where the first entries of the cycles are in increasing order.
For example, $\pi=2517364=(1,2,5,3)(4,7)(6)$. A {\em cyclic permutation} is a permutation that consists of only one cycle.

An {\em up-down} permutation of $[n]$ is a permutation $\pi$ satisfying $$\pi_1<\pi_2>\pi_3<\pi_4>\dots.$$
Let $\UD_n$ denote the set of up-down permutations of length $n$.
It is well known that the size of $\UD_n$ is the Euler number $E_n$.
The exponential generating function (EGF for short) for the Euler numbers is
 $$E(z)=\sec z+\tan z=\sum_{n\ge0} E_n\frac{z^n}{n!},$$
and the first values of $E_n$ are $1,1,1,2,5,16,61,272,\dots$. A permutation satisfying $\pi_1>\pi_2<\pi_3>\pi_4<\dots$ is called a {\em down-up} permutation.

In this paper we are concerned with a variation of the above definition. Instead of requiring the one-line notation to be up-down,
we will study permutations whose cycles,
when written in standard form, have an up-down pattern. A precise definition of these permutations, which we call {\em cycle-up-down permutations},
is given in Section~\ref{sec:CUD}. It is also shown, both analytically and bijectively, that they are enumerated by the Euler numbers.
In Section~\ref{sec:GCUD} we define a less restrictive family of permutations, which we call {\em generalized cycle-up-down permutations},
and enumerate them as well. In Section~\ref{sec:stat} we obtain refined generating functions for these families of permutations with respect to several statistics. We also find statistics that are preserved by our bijections between up-down and cycle-up-down permutations.
In Section~\ref{sec:statall} we study the distribution of some of these permutation statistics on $\S_n$, obtaining some formulas that involve the
Stirling numbers of the first kind. Finally, Section~\ref{sec:interpret} gives another interpretation of cycle-up-down permutations in terms
of perfect matchings.

\ms

Let us now introduce some notation and definitions that will be needed later on.
Let $A=\{a_1,a_2,\dots,a_n\}$ be a finite subset of the natural numbers, and assume that $a_1<\dots<a_n$.
Given a permutation $\sigma$ of $A$, let $\overline\sigma$ be the permutation obtained by replacing each entry $a_i$ of the one-line notation of $\sigma$ with $a_{n+1-i}$. For example, $\overline{2634}=6243$.
Following~\cite{J}, we will call this operation a {\em switch}.

We say that $\pi_i$ is a {\em left-to-right (LR) minimum} (resp. {\em maximum}) of a permutation $\pi$ if $\pi_i<\pi_j$ (resp. $\pi_i>\pi_j$) for all $1\le j<i$. If $i\ge2$, we say that $\pi_i$ is an {\em extreme element} if it is
either an LR minimum or an LR maximum (see~\cite[p.~98]{KPP}). The {\em min-max subsequence} of $\pi$ is defined to be the subsequence $\pi_{i_1}\pi_{i_2}\dots\pi_{i_k}$ where $\pi_{i_1}=1$ is the smallest entry of $\pi$,
$\pi_{i_2}$ is the largest entry to the right of $\pi_{i_1}$, $\pi_{i_3}$ is the smallest entry to the right of $\pi_{i_2}$, and so on. Note that we always have $i_k=n$. For example, the min-max sequence of $\pi=48127635$ is $1735$.
An {\em excedance} (resp. {\em deficiency}) of $\pi$ is a value $i$ such that $\pi_i>i$ (resp. $\pi_i<i$).
We use the term {\em odd cycle} (resp. {\em even cycle}) to mean a cycle of odd (resp. even) length.

For $\pi\in\S_n$, we define the following statistics, which will be studied in Sections~\ref{sec:stat} and~\ref{sec:statall}:
\bit
\item $c(\pi)=$ number of cycles of $\pi$,
\item $c_o(\pi)=$ number of odd cycles of $\pi$,
\item $c_e(\pi)=$ number of even cycles of $\pi$,
\item $\fp(\pi)=$ number of fixed points of $\pi$,
\item $\lrm(\pi)=$ number of left-to-right minima of $\pi$,
\item $\st(\pi)=$ length of the min-max sequence of $\pi$,
\item $\extr(\pi)=$ number of extreme elements of $\pi$,
\item $\exc(\pi)=$ number of excedances of $\pi$.
\eit

\section{Cycle-up-down permutations}\label{sec:CUD}

A cycle is said to be {\em up-down} if, when written in standard form, say $(b_1,b_2,\dots)$, one has
$b_1<b_2>b_3<\dots$.
We define a permutation $\pi$ to be {\em cycle-up-down} ({\em CUD} for short) if it is a product of up-down cycles.
For example, $(1,5,2,7)(3)(4,8,6)(9)$ is CUD, but $(1,3,5)(2,4)(6)$ is not. Let $\CUD_n$ be the set of CUD permutations of $[n]$.
The main result of this section is the enumeration of CUD permutations.

\begin{prop}\label{prop:CUD}
The number of CUD permutations of $[n]$ is $E_{n+1}$.
\end{prop}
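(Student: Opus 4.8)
The plan is to compute the exponential generating function (EGF) for $|\CUD_n|$ via the exponential formula and then recognize it as a known series for the Euler numbers.

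First I would count the up-down cyclic permutations of a fixed $k$-element set. Writing such a cycle in standard form $(b_1,b_2,\dots,b_k)$ forces $b_1$ to be the minimum, so $b_1<b_2$ holds automatically and the up-down condition $b_1<b_2>b_3<\cdots$ reduces to $b_2>b_3<b_4>\cdots$; that is, $(b_2,\dots,b_k)$ must be a down-up permutation of the remaining $k-1$ elements. Since down-up and up-down permutations of $[k-1]$ are equinumerous (apply a switch, or complementation), the number of up-down cyclic permutations of a $k$-set is $E_{k-1}$ for every $k\ge1$, with $E_0=1$ accounting for the singleton cycle.

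Next, let $C(z)=\sum_{k\ge1}E_{k-1}z^k/k!$ be the EGF for up-down cycles. Integrating $E(z)=\sec z+\tan z$ term by term gives
\[ C(z)=\int_0^z(\sec t+\tan t)\,dt=\ln(\sec z+\tan z)-\ln\cos z=-\ln(1-\sin z), \]
where the last step uses $\dfrac{\sec z+\tan z}{\cos z}=\dfrac{1+\sin z}{\cos^2 z}=\dfrac{1}{1-\sin z}$. A CUD permutation of $[n]$ is exactly a choice of a set partition of $[n]$ together with an up-down cyclic order on each block, so the exponential formula yields
\[ \sum_{n\ge0}|\CUD_n|\,\frac{z^n}{n!}=e^{C(z)}=\frac{1}{1-\sin z}. \]

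Finally I would identify this series with a shift of $E(z)$: since $\dfrac{1}{1-\sin z}=\dfrac{1+\sin z}{\cos^2 z}=\sec^2 z+\sec z\tan z=\dfrac{d}{dz}(\tan z+\sec z)=E'(z)$, extracting coefficients gives $|\CUD_n|=n!\,[z^n]\,E'(z)=E_{n+1}$. The only genuine care needed is the trigonometric bookkeeping — checking the antiderivative and the two identities $\frac{\sec z+\tan z}{\cos z}=\frac{1}{1-\sin z}$ and $\frac{1}{1-\sin z}=E'(z)$ — together with the $k=1$ edge case in the cycle count; none of this is deep, but a sign error or off-by-one in the index shift is the likeliest pitfall. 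I would also remark that a bijective proof of this identity is possible and is deferred until the relevant statistics on $\CUD_n$ and $\UD_{n+1}$ have been set up.
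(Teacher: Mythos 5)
Your proposal is correct and follows essentially the same route as the paper's analytic proof: count up-down cycles of length $k$ by $E_{k-1}$, apply the exponential formula to get $\exp\bigl(\int_0^z E(u)\,du\bigr)$, and identify this with $E'(z)$. The only cosmetic difference is that you verify the identity by computing the closed forms $-\ln(1-\sin z)$ and $1/(1-\sin z)$ directly, whereas the paper takes logarithms and differentiates to reduce it to $E=E''/E'$; both are routine trigonometric checks.
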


In subsection~\ref{sec:CUDbij} we give a bijective proof of this fact. In order to construct a bijection,
it will be convenient to separate odd cycles from even cycles.
Up-down even cycles can alternatively be described as fully alternating cycles, meaning that in any representation of the cycle,
the entries alternate going up and down. Then, CUD permutations having only cycles of even length are precisely those permutations $\pi$
with no fixed points where the image of each excedance is a deficiency, and viceversa.
The following lemma enumerates these permutations.

\begin{lemma}\label{lemma:evenCUD}
The number of CUD permutations of $[2m]$ all of whose cycles are even is $E_{2m}$.
\end{lemma}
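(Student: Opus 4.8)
The plan is to set up an exponential generating function for CUD permutations all of whose cycles are even, by first counting the building blocks: up-down cycles of even length. An up-down (equivalently, fully alternating) cycle of length $2k$ written in standard form looks like $(b_1,b_2,\dots,b_{2k})$ with $b_1=\min$, $b_1<b_2>b_3<\dots<b_{2k-2}>b_{2k-1}<b_{2k}>b_1$ — that is, the sequence $b_2b_3\dots b_{2k}$ is a down-up permutation of the remaining $2k-1$ elements, because after fixing $b_1$ to be the smallest, the two ``wrap-around'' inequalities $b_{2k}>b_1$ and $b_1<b_2$ are automatic. Hence the number of up-down cycles on a fixed set of size $2k$ equals the number of down-up permutations of length $2k-1$, which is the Euler number $E_{2k-1}$. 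So the first step is to record that the EGF for a single even up-down cycle is $C(z)=\sum_{k\ge1}E_{2k-1}\,\dfrac{z^{2k}}{(2k)!}$, the odd-indexed part of $\sec z+\tan z$, namely $C(z)=\tan z$ (since $\tan z=\sum_{k\ge1}E_{2k-1}z^{2k-1}/(2k-1)!$ has the wrong power — one must be careful here: integrating, $C(z)$ has derivative $\tan z$, so $C(z)=-\log\cos z=\log\sec z$).

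The second step is the exponential formula: a CUD permutation all of whose cycles are even is just a set of such cycles, so the EGF for the whole class is $\exp(C(z))=\exp(\log\sec z)=\sec z$. Therefore the number of CUD permutations of $[2m]$ with all cycles even is $(2m)!\,[z^{2m}]\sec z=E_{2m}$, which is exactly the claim; note $\sec z$ is even so the odd coefficients vanish, consistently with the fact that a permutation of an odd-size set cannot be a product of even cycles.

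The main obstacle, and the step I would be most careful about, is the bookkeeping in the first step: verifying that standard-form even up-down cycles on a $2k$-set biject with down-up sequences on $2k-1$ letters, and then correctly identifying the resulting generating function as $\log\sec z$ rather than $\tan z$ or some shifted variant. Once the EGF $C(z)=\log\sec z$ for a connected block is pinned down, the rest is the routine application of $\exp$ and reading off a Taylor coefficient. An alternative, more robust route that avoids computing $C(z)$ in closed form is to appeal directly to the structural description given just before the lemma — these are precisely the fixed-point-free permutations in which excedances map to deficiencies and vice versa — and to count those, for instance via the switch operation $\sigma\mapsto\overline\sigma$ or a sign-reversing/pairing argument on linear alternating permutations; I would keep this in reserve in case the cycle-counting argument becomes delicate, but I expect the EGF approach above to go through cleanly.
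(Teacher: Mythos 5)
Your proposal is correct and follows essentially the same route as the paper's analytic proof: you identify the EGF for a single even up-down cycle as $\sum_{k\ge1}E_{2k-1}z^{2k}/(2k)!=\int_0^z\tan u\,du=\log\sec z$ and apply the exponential formula to get $\sec z$. The only difference is that you spell out the count $E_{2k-1}$ of even up-down cycles via the down-up sequence $b_2\dots b_{2k}$, a detail the paper establishes (via a complement) in the proof of Proposition~\ref{prop:CUD}.
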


If we only allow odd cycles, we obtain a similar result.

\begin{lemma}\label{lemma:oddCUD}
 The number of CUD permutations of $[m]$ all of whose cycles are odd is $E_{m}$.
\end{lemma}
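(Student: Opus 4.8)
We need to prove that the number of CUD permutations of $[m]$ all of whose cycles are odd equals $E_m$. An odd up-down cycle, written in standard form $(b_1, b_2, b_3, \dots, b_{2k+1})$, satisfies $b_1 < b_2 > b_3 < \dots < b_{2k} > b_{2k+1}$ with $b_1$ smallest—so it ends on a descent. A single fixed point $(b_1)$ is trivially odd up-down.

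The approach: build a bijection from $\Lambda_m$ (up-down permutations of $[m]$) to odd-cycle CUD permutations of $[m]$, or alternatively an EGF argument. Let me plan the EGF route since it's cleanest, then sketch the bijective route.

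**EGF approach.** The plan is to compute the EGF for odd up-down cycles and exponentiate. Let $C_{\text{odd}}(z) = \sum_{k \ge 0} c_{2k+1} \frac{z^{2k+1}}{(2k+1)!}$ where $c_{2k+1}$ is the number of odd up-down cycles on a fixed $(2k+1)$-element set. An up-down cycle in standard form on a set of size $2k+1$ is determined by arranging the non-minimal $2k$ elements into positions $b_2, \dots, b_{2k+1}$ with the pattern $b_2 > b_3 < b_4 > \dots < b_{2k} > b_{2k+1}$; equivalently (after the forced $b_1 = \min$), this is a down-up permutation of $2k$ elements... wait, need care: $b_2 > b_3 < b_4 > \cdots$. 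That's a down-up sequence of length $2k$, counted by the tangent number $E_{2k}$ (with $E_0 = 1$ meaning $c_1 = 1$). So $c_{2k+1} = E_{2k}$, hence $C_{\text{odd}}(z) = \sum_{k\ge 0} E_{2k} \frac{z^{2k+1}}{(2k+1)!}$. First I would recognize this sum: differentiating, $C_{\text{odd}}'(z) = \sum_{k \ge 0} E_{2k}\frac{z^{2k}}{(2k)!} = \sec z$, so $C_{\text{odd}}(z) = \int_0^z \sec t\, dt = \log(\sec z + \tan z) = \log E(z)$. Therefore the EGF for odd-cycle CUD permutations is $\exp(C_{\text{odd}}(z)) = \exp(\log E(z)) = E(z) = \sec z + \tan z$, whose coefficients are exactly the $E_m$. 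This completes the proof.

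**Main obstacle and alternative.** The main thing to get right is the claim $c_{2k+1} = E_{2k}$: verifying that removing the forced smallest element from an odd up-down cycle of length $2k+1$ leaves a down-up word of length $2k$ on the remaining elements, and that down-up words of length $2k$ are counted by $E_{2k}$ (the secant/tangent number with $E_0=1$) — this is where an off-by-one in parity or the boundary case $k=0$ (fixed points) could slip in. If one instead prefers a bijective proof, the plan would be: take an up-down permutation $\pi \in \Lambda_m$, decompose it via a standard "cycle lemma"–type correspondence, or use the recursive structure — remove $m$ (which sits at an up-position since $m$ is the max) together with its neighbors, or apologize and invoke a bijection $\Lambda_m \leftrightarrow$ odd-cycle-CUD already implicit in the proof of Proposition~\ref{prop:CUD}. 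The EGF argument, however, avoids all bijective bookkeeping and is the route I would write up, with Lemma~\ref{lemma:evenCUD}'s analogous computation (EGF of even up-down cycles being $\sec z - 1$ or $\log\sec z$-type, yielding $E_{2m}$) serving as a parallel template.
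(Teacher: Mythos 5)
Your EGF argument is correct and is essentially the paper's own analytic proof: you count odd up-down cycles on a $(2k+1)$-set as $E_{2k}$ (via the forced minimum followed by a down-up word), obtain the cycle EGF $\int_0^z\sec u\,du=\log(\sec z+\tan z)$, and exponentiate to get $E(z)$. The only nit is terminological: $E_{2k}$ is a secant number, not a tangent number, though your count is right.
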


\subsection{Proofs via generating functions}

Let us first give relatively straightforward proofs of the above results using generating functions.

\begin{proof}[Analytic proof of Proposition~\ref{prop:CUD}]
The number of CUD cyclic permutations of length $n$ is $E_{n-1}$,
because each such permutation can be written as $(1,b_2,b_3,\dots,b_n)$,
with $(n+1-b_2)(n+1-b_3)\dots(n+1-b_n)$ being an up-down permutation of $[n-1]$.
Thus, the corresponding EGF is
\beq\label{eq:udcycles}\sum_{n\ge1} E_{n-1}\frac{z^n}{n!}=\int_0^z E(u)\,du.\eeq
Since any CUD permutation is an unordered product of up-down cycles, the EGF for CUD permutations is $$\exp\left(\int_0^z E(u)\,du\right),$$ via the set construction (see~\cite{FS}).
Thus, using that $\sum_{n\ge0} E_{n+1}\frac{z^n}{n!}=E'(z)$, the statement of the proposition, in terms of generating functions, is equivalent to
$$\exp\left(\int_0^z E(u)\,du\right)=E'(z).$$

Taking logarithms and differentiating, this equation is equivalent to $E(z)={E''(z)}/{E'(z)}$, which can be proved by simple calculus,
since $E'(z)=\sec z \tan z+\sec^2 z=E(z)\sec z$ and $E''(z)=E'(z)\sec z+E(z)\sec z\tan z=E(z)^2\sec z$.
\end{proof}

\begin{proof}[Analytic proof of Lemma~\ref{lemma:evenCUD}]
The EGF for CUD cyclic permutations of even length is
$$\sum_{m\ge1} E_{2m-1}\frac{z^{2m}}{(2m)!}=\int_0^z \tan u\,du.$$
Thus, the statement of the proposition is equivalent to  $$\exp\left(\int_0^z\tan u\,du\right)=\sec z,$$ which can be easily checked.
\end{proof}

\ms

\nin {\bf Remark.}
It is now clear that if we want to count permutations $\pi$
where the image of excedances are deficiencies and viceversa, we only need to modify the proof
of Lemma~\ref{lemma:evenCUD} by allowing fixed points. The EGF for these permutations is then
$$\exp\left(z+\int_0^z\tan u\,du\right)=e^z\sec z,$$
which appears in~\cite[A003701]{OEIS}.

\ms

\begin{proof}[Analytic proof of Lemma~\ref{lemma:oddCUD}]
The EGF for CUD cyclic permutations of odd length is
$$\sum_{m\ge0} E_{2m}\frac{z^{2m+1}}{(2m+1)!}=\int_0^z \sec u\,du.$$
Thus, the statement of the proposition is equivalent to $$\exp\left(\int_0^z\sec u\,du\right)=E(z),$$ which can be easily checked.
\end{proof}

\subsection{Bijective proofs}\label{sec:CUDbij}

Here we prove the three above results bijectively.

\begin{proof}[Bijective proof of Lemma~\ref{lemma:evenCUD}]
Given $\pi=\pi_1\pi_2\dots\pi_n\in\UD_{2m}$, let $\pi_{i_1}>\pi_{i_2}>\dots>\pi_{i_k}$ be its left to right minima. Note that all the $i_j$ must be odd. Then
\beq\label{eq:g}g(\pi)=(\pi_{i_1},\dots,\pi_{i_2-1})(\pi_{i_2},\dots,\pi_{i_3-1})\dots(\pi_{i_k},\dots,\pi_{2m})\eeq
is a CUD permutation with only even cycles, and $g$ is a bijection.

To find the preimage of a CUD permutation with only even cycles, write it as a product of cycles
in standard form ordered by decreasing first entry. Removing the
parentheses gives the one-line notation of an up-down permutation which is its preimage by $g$.

For example, if $\pi=47261538$, we have $g(\pi)=(4,7)(2,6)(1,5,3,8)$.
\end{proof}

\begin{proof}[Bijective proof of Lemma~\ref{lemma:oddCUD}]
It will be convenient to consider permutations of a finite set $A=\{a_1,a_2,\dots,a_n\}$ of natural numbers with $a_1<a_2<\dots<a_n$.
We describe a bijection $f$ between up-down permutations of $A$ and CUD permutations of $A$ with only odd cycles. Given an up-down permutation $\pi=\pi_1\pi_2\dots\pi_n$, let $k$ be such that $\pi_k=a_1$.
Define recursively $$f(\pi)=(\pi_k,\pi_{k-1},\dots,\pi_1)\,f(\overline{\pi_{k+1}\pi_{k+2}\dots \pi_n}),$$
where $\sigma\mapsto\overline\sigma$ is the switch operation defined in Section~\ref{sec:intro}.

For example, if $\pi=471938562$, then
\begin{multline*} f(471938562)=(1,7,4)f(\overline{938562})=(1,7,4)f(283659)=(1,7,4)(2)f(\overline{83659})\\
=(1,7,4)(2)f(59683)=(1,7,4)(2)(3,8,6,9,5),\end{multline*}
which is a CUD permutation. Conversely, given a CUD permutation of $A$ with odd cycles, write it as a product of cycles in standard form
ordered by increasing first entry, and reverse the previous steps. For example, if $\sigma=(1,8,5,7,2)(3,6,4)$,
then $$\sigma=(1,8,5,7,2)f(463)=(1,8,5,7,2)f(\overline{436})=f(27581436).$$
\end{proof}

The bijection $f$ is similar to Chebikin's decomposition of up-down permutations into a set of up-down permutations of odd length~\cite[p.~26]{Che}.

\begin{proof}[Bijective proof of Proposition~\ref{prop:CUD}]
We can combine the bijections in the above two proofs to give a bijection $\bij:\UD_{n+1}\rightarrow\CUD_n$.
In the above proof of Lemma~\ref{lemma:evenCUD} we described a bijection $g$
from up-down permutations of even length to CUD permutations with only even cycles.
Consider the straightforward generalization of this bijection to permutations of a finite set $A$ of natural numbers.

Given $\pi\in\UD_{n+1}$, let $k$ be such that $\pi_{k}=1$, and for $1\le i\le n+1$, let $\pi'_i=\pi_i-1$. Define
$\bij(\pi)$ to be the CUD permutation which has cycle decomposition given by
$$g(\pi'_1\dots\pi'_{k-1})f(\overline{\pi'_{k+1}\dots\pi'_{n+1}}).$$
For example, if $\pi=6\,9\,3\,8\,5\,12\,1\,10\,2\,11\,4\,7$, then
$$\bij(\pi)=g(5\,8\,2\,7\,4\,11)f(\overline{9\,1\,10\,3\,6}).$$
Now, noticing that the left-to-right minima of $5\,8\,2\,7\,4\,11$ are $5$ and $2$, we have
$$\bij(\pi)=(5,8)(2,7,4,11)f(3\,10\,1\,9\,6)=(5,8)(2,7,4,11)(1,10,3)(6)(9).$$
\end{proof}

An alternative bijective proof of Proposition~\ref{prop:CUD} can be obtained by modifying a bijection of Johnson~\cite{J} involving the so-called {\em zigzig sequences}.
We next describe the resulting map $\Jbij:\UD_{n+1}\rightarrow\CUD_n$.
Given $\pi=\pi_1\pi_2\dots\pi_{n+1}\in\UD_{n+1}$, let $\pi_k$ be its rightmost extreme element. If it is an LR minimum (i.e. $1$ in this case), let $\tau=\pi$;
if it is an LR maximum (i.e. $n$ in this case), let $\tau=\ol{\pi}$. We define $(\tau_k,\tau_{k+1},\dots,\tau_{n+1})$ to be the first cycle of $\Jbij(\pi)$, and we delete the entries
$\tau_k\tau_{k+1}\dots\tau_{n+1}$ from $\tau$. Now we look at the rightmost extreme element $\tau_j$ of what remains of $\tau$. Again, if it is an LR maximum, we switch $\tau$ to make $\tau_j$ an LR minimum.
The second cycle of $\Jbij(\pi)$ is $(\tau_j,\tau_{k+1},\dots,\tau_{k-1})$, we delete the entries $\tau_j\tau_{j+1}\dots\tau_{k-1}$ and repeat this process until $\tau$ has only one entry, which at this point is necessarily $n+1$.

For example, if $\pi=351827496$, then its rightmost extreme element is $9$, so we take $\tau=\ol{\pi}=759283614$, add the cycle $(1,4)$ to $\Jbij(\pi)$, and remove $1$ and $4$ from $\tau$.
Now the rightmost extreme element is $2$, so $\tau=7592836$, we add the cycle $(2,8,3,6)$ to $\Jbij(\pi)$, and delete $2836$ from $\tau$. The rightmost extreme element of $759$ is $9$,
so $\tau=\ol{759}=795$, and the cycle $(5)$ gets added to $\Jbij(\pi)$. The rightmost extreme element of $79$ is $9$,
so $\tau=\ol{79}=97$, the cycle $(7)$ gets added to $\Jbij(\pi)$, and we end here, obtaining $\Jbij(\pi)=(1,4)(2,8,3,6)(5)(7)$.

The inverse map can be described as follows. Given $\sigma\in\CUD_n$, write its cycles in standard form ordered by decreasing first element. Removing the parenthesis (this correspondence is due to Foata) gives
the one-line notation of a permutation. Let $\tau$ be the permutation obtained by adding $n+1$ in front of it. Let $k$ be the largest index such that $\tau_1\tau_2\dots\tau_k$ is alternating, that is, either up-down or down-up. Replace $\tau$ with
$\ol{\tau_1\tau_2\dots\tau_k}\tau_{k+1}\dots\tau_n$. Consider again the largest alternating prefix of $\tau$ and switch it. Repeat until $\tau$ is alternating. If it is up-down, then $\Jbij^{-1}(\sigma)=\tau$;
if it is down-up, then $\Jbij^{-1}(\sigma)=\ol{\tau}$.

Going back to the above example for $\sigma=(1,4)(2,8,3,6)(5)(7)$, after applying Foata's correspondence and inserting $9$ at the beginning we get $\tau=975283614$. It is alternating up to $\tau_2=7$, so we let
$\tau=\ol{97}5283614=795283614$. Now it is alternating up to $\tau_3=5$, so we let $\tau=\ol{795}283614=759283614$. This is already an alternating permutation. Since it is down-up, we recover $\Jbij^{-1}(\sigma)=\ol{\tau}=351827496$.

\section{Generalized cycle-up-down permutations}\label{sec:GCUD}

We say that a cycle is {\em generalized up-down} if it admits a representation $(a_1,a_2,\dots)$ satisfying $a_1<a_2>a_3<\dots$
(we call this an up-down representation of the cycle).
Note that up-down cycles are always generalized up-down cycles, but the converse is not true. For example, $(2,3,4,6)$ is a generalized up-down cycle,
but not an up-down cycle.
Define a permutation to be {\em generalized-cycle-up-down} (GCUD for short) if it is a product of generalized up-down cycles.
Let $\GCUD_n$ be the set of GCUD permutations of $[n]$.
In this section we enumerate GCUD permutations.

We start by enumerating GCUD cyclic permutations. First observe that generalized up-down cycles of odd length admit a unique up-down representation.
Indeed, given a cycle $(a_1,a_2,\dots,a_{2k+1})$ with $a_1<a_2>a_3<\dots>a_{2k+1}$, there is no
other representation of the cycle because either $a_{2k+1}<a_1<a_2$ or $a_{2k}>a_{2k+1}>a_1$ (unless $k=0$).
Thus, the number of GCUD cyclic permutations of odd length $2k+1$ is $E_{2k+1}$,
with EGF $\tan z$. The EGF for GCUD permutations having only odd cycles is $\exp(\tan z)$~\cite[A006229]{OEIS}.

To count GCUD cyclic permutations of even length we use the following result, which appears in \cite[Lemma~4]{Elkies}.
The sequence $kE_{2k-1}$ is \cite[A024255]{OEIS}.
\begin{lemma}\label{lem:uddecr}
The number of permutations $\pi\in\UD_{2k}$ with $\pi_{2k}>\pi_1$ is $k E_{2k-1}$.
\end{lemma}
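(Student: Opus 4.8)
The plan is to count up-down permutations of $[2k]$ according to whether $\pi_{2k}>\pi_1$ or $\pi_{2k}<\pi_1$, and to show that the first event occurs with "probability" exactly $k/(2k)=1/2$ in a weighted sense—more precisely, that the complement has size $E_{2k}-kE_{2k-1}$. Since $|\UD_{2k}|=E_{2k}$ is already known, it suffices to produce a bijection, or a weight-preserving correspondence, between up-down permutations of $[2k]$ with $\pi_{2k}<\pi_1$ and some set of size $E_{2k}-kE_{2k-1}$; alternatively, one can directly count those with $\pi_{2k}>\pi_1$.

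First I would try the direct approach: given $\pi\in\UD_{2k}$ with $\pi_{2k}>\pi_1$, note that $\pi_1$ is the value $1$ is impossible unless... actually $\pi_1$ need not be $1$, but $\pi_{2k}$ is a "valley-ending" position, so $\pi_{2k}<\pi_{2k-1}$. The condition $\pi_1<\pi_{2k}$ singles out permutations where the last (bottom) entry exceeds the first (bottom) entry. A natural idea is to consider the position of the entry $1$. Since $\pi$ is up-down, $1$ sits at an odd position $2j-1$ (a valley), and the factor $\pi_1\cdots\pi_{2j-1}$ is up-down ending in a descent while $\pi_{2j-1}\cdots\pi_{2k}$ is down-up (starting at the global minimum). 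Removing $1$ and splicing, one can decompose $\pi$; this is reminiscent of the recursive structures used in the bijective proofs of Lemmas~\ref{lemma:evenCUD} and~\ref{lemma:oddCUD} above. I would aim to show that the condition $\pi_{2k}>\pi_1$ is equivalent, after such a decomposition, to a statement about one of the blocks being nonempty or about a left-to-right minimum count, and then sum over $j$ using known sub-counts (products of Euler numbers indexed by block sizes, via the standard "alternating permutation" convolution identities coming from $E(z)=\sec z+\tan z$).

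Alternatively—and this may be cleaner—I would attack it generating-function-style, in parallel with the analytic proofs already given in the excerpt. Introduce the refined count $a_{2k}=\#\{\pi\in\UD_{2k}:\pi_{2k}>\pi_1\}$ and set up a functional equation for $A(z)=\sum_k a_{2k}\frac{z^{2k}}{(2k)!}$ by conditioning on the position of the minimum $1$ (which splits an up-down permutation of $[2k]$ into an up-down left block and a down-up right block sharing the minimum). The two cases $\pi_{2k}>\pi_1$ vs.\ $\pi_{2k}<\pi_1$ correspond to conditions on the right block. One then hopes to recognize $A(z)$ as $\int_0^z \frac{u}{2}\sec u\tan u\,du$ or something whose coefficients are $kE_{2k-1}/(2k)!$, equivalently that $2k\cdot a_{2k}=k E_{2k-1}\cdot 2 = $ … I would check that $\sum_k kE_{2k-1}\frac{z^{2k}}{(2k)!}$ equals $\frac{z}{2}\tan z$ after differentiation, i.e.\ has a clean closed form, and match.

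The main obstacle I expect is correctly bookkeeping the "down-up" versus "up-down" block types when the minimum is extracted, and ensuring the boundary comparison $\pi_{2k}$ vs.\ $\pi_1$ translates faithfully through the decomposition—in particular handling the degenerate cases where a block is empty or has length one (so $\pi_1=1$, or $1$ is in the penultimate valley), which is exactly where the factor $k$ rather than $k-\text{something}$ comes from. Since the statement is attributed to \cite[Lemma~4]{Elkies}, I would ultimately just cite that source; but the sketch above (conditioning on the position of $1$ and matching generating functions, in the spirit of the analytic proofs of Lemmas~\ref{lemma:evenCUD} and~\ref{lemma:oddCUD}) is the route I would take to reprove it.
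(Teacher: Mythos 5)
Your proposal is a set of candidate plans rather than a proof, and the decisive idea is absent from all of them. The obstacle you yourself flag --- that the comparison $\pi_{2k}$ versus $\pi_1$ couples the two ends of the word and therefore does not decompose when you split $\pi$ at the position of $1$ into a left block and a right block --- is real, and neither of your routes gets past it. The convolution/functional-equation approach stalls exactly there: conditioning on the position of $1$ produces two blocks whose counts you could convolve, but the constraint $\pi_{2k}>\pi_1$ ties the first entry of one block to the last entry of the other, so the sub-counts are not independent and no clean functional equation for your $A(z)$ emerges (you never write one down). Your identity $\sum_k kE_{2k-1}\frac{z^{2k}}{(2k)!}=\frac{z}{2}\tan z$ is correct but unused, and falling back on citing Elkies is not a proof. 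Two local claims are also wrong: for $\pi\in\UD_{2k}$ the final comparison is $\pi_{2k-1}<\pi_{2k}$, so $\pi_{2k}$ sits after an ascent (a peak position), not a valley; and the right block $\pi_{2j-1}\dots\pi_{2k}$ beginning at the minimum $1$ is up-down, not down-up.

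The idea that closes the gap --- and is what the paper does --- is to rotate rather than splice. If $\pi_j=1$ (so $j$ is odd), send $\pi$ to the cyclic rotation $\pi_j\pi_{j+1}\dots\pi_{2k}\pi_1\dots\pi_{j-1}$. Because $j$ is odd, every internal comparison keeps its parity, and the single new junction places $\pi_{2k}$ at the even position $2k-j+1$ followed by $\pi_1$; the hypothesis $\pi_{2k}>\pi_1$ is precisely the descent needed there, so the image is an up-down permutation beginning with $1$. Conversely, each of the $E_{2k-1}$ up-down permutations $\sigma$ of $[2k]$ with $\sigma_1=1$ has exactly $k$ cyclic rotations starting at odd positions, and each of these is up-down with last entry exceeding the first (the required inequality at the seam is $\sigma_{2i-2}>\sigma_{2i-1}$, which holds automatically). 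This $k$-to-$1$ correspondence yields $kE_{2k-1}$ at once, with no bookkeeping of block types or degenerate cases.
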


\begin{proof}
Given an up-down permutation with $\pi_{2k}>\pi_1$, let $j$  be the index such that $\pi_j=1$ (note that $j$ must be odd). Then the permutation
$\pi_j \pi_{j+1}\dots \pi_{2k}\pi_1\dots \pi_{j-1}\in\S_{2k}$ is an up-down permutation beginning with $1$. Conversely, for any up-down permutation
$\sigma_1\sigma_2\dots\sigma_{2k}$ with $\sigma_1=1$, its $k$ cyclic rotations
$$\sigma_{2i-1}\sigma_{2i}\dots\sigma_{2k}\sigma_1\sigma_2\dots\sigma_{2i-2}$$
for $1\le i\le k$ are up-down permutations whose last entry is larger than the first.
Since the number of up-down permutations in $\S_{2k}$ beginning with $1$ is $E_{2k-1}$, the result follows.
\end{proof}

An immediate consequence of Lemma~\ref{lem:uddecr} is that the number of permutations $\pi\in\UD_{2k}$ with $\pi_{2k}<\pi_1$ is $E_{2k}-k E_{2k-1}$.

\begin{lemma}\label{lem:gudceven}
The number of GCUD cyclic permutations of $[2k]$ is $E_{2k}-(k-1)E_{2k-1}$.
\end{lemma}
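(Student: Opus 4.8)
The plan is to count GCUD cyclic permutations of $[2k]$ by classifying them according to whether their (generalized) up-down representations are unique or not. Unlike the odd case, an even-length generalized up-down cycle may admit more than one up-down representation, so I need to be careful not to overcount. First I would analyze exactly when a cyclic permutation of even length has multiple up-down representations. Writing a generalized up-down cycle as $(a_1,a_2,\dots,a_{2k})$ with $a_1<a_2>a_3<\dots>a_{2k-1}<a_{2k}$, the only possible alternative representation comes from a cyclic rotation, and since rotating by an odd amount destroys the alternating pattern (it produces a down-up word), the only candidate is the rotation $(a_3,a_4,\dots,a_{2k},a_1,a_2)$, or iterating, any even rotation. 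So a second up-down representation $(b_1,b_2,\dots,b_{2k})$ exists precisely when some even cyclic rotation of $a_1a_2\dots a_{2k}$ is again up-down.

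Next I would count sequences by up-down representations rather than by cycles, and then correct. The number of up-down \emph{words} $a_1a_2\dots a_{2k}$ on $[2k]$ that represent a cyclic permutation (i.e. such that $(a_1,\dots,a_{2k})$ is a single $2k$-cycle) is what I want to relate to $\UD_{2k}$. Here is where Lemma~\ref{lem:uddecr} enters: I claim that $(a_1,a_2,\dots,a_{2k})$ is a single cycle exactly when the up-down permutation $\pi=a_1a_2\dots a_{2k}$ is \emph{not} a product of two or more cycles, and the decomposable ones are controlled by the first-entry/last-entry comparison. More precisely, the approach is to set up a recursion or a direct bijection: the up-down permutations of $[2k]$ with $\pi_1<\pi_{2k}$ (there are $kE_{2k-1}$ of them by Lemma~\ref{lem:uddecr}) are exactly those that, read as a cycle, "wrap around," and each even-length generalized up-down cyclic permutation with a unique representation corresponds to such a $\pi$, while those with $\pi_1>\pi_{2k}$ split off a proper up-down prefix forming its own cycle. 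Counting cyclic permutations this way and then identifying the ones with two representations (which get counted twice) gives the correction term.

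The main obstacle I expect is the bookkeeping of the double-counting: pinning down exactly which cyclic permutations of $[2k]$ admit two distinct up-down representations and showing there are exactly $E_{2k-1}$ of them, so that $2\cdot(\text{count with multiplicity})-E_{2k-1}$, or equivalently (total up-down words representing $2k$-cycles) minus (those counted twice), collapses to $E_{2k}-(k-1)E_{2k-1}$. To handle this I would argue that a $2k$-cycle has two up-down representations iff, after writing it in the "wrap-around" form coming from $\pi_1<\pi_{2k}$, cyclically shifting the alternating permutation by two positions again yields an alternating permutation with the same last-exceeds-first property — and such configurations are in bijection with up-down permutations of $[2k-1]$ beginning with $1$ via the rotation argument in the proof of Lemma~\ref{lem:uddecr}, giving the count $E_{2k-1}$. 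Then the arithmetic $kE_{2k-1} + (E_{2k}-kE_{2k-1}) - E_{2k-1}$ is not quite the claimed formula, so I would instead track it as: each cyclic permutation is counted by $r$ up-down words where $r\in\{1,2\}$, the total number of up-down words on $[2k]$ representing some $2k$-cycle equals (number with $r=1$) $+\,2\cdot$(number with $r=2$), and I would compute that total independently — it is $E_{2k}$ minus the up-down permutations that are \emph{not} single cycles, of which there are $(k-1)E_{2k-1}$ fewer than one might expect once the $r=2$ overcount is subtracted — yielding the stated $E_{2k}-(k-1)E_{2k-1}$ after combining. I would carry out this arithmetic carefully at the end, as it is the one place an off-by-$E_{2k-1}$ slip is easy to make.
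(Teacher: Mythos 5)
Your overall strategy --- count up-down words $a_1a_2\dots a_{2k}$ and then divide out by the number of up-down representations each cycle admits --- is the same as the paper's, but two concrete errors derail the execution. First, you assert that a $2k$-cycle with more than one up-down representation has exactly two of them ($r\in\{1,2\}$), contradicting your own earlier (correct) observation that any even rotation is a candidate. In fact, if $(a_1,\dots,a_{2k})$ is up-down and $a_{2k}>a_1$, then \emph{all} $k$ even rotations are up-down (the only comparisons affected by the wrap-around are $a_{2k}>a_1$ and $a_1<a_2$, both of which hold), so the multiplicity is exactly $k$, not $2$. This is precisely why the $kE_{2k-1}$ words from Lemma~\ref{lem:uddecr} collapse to $E_{2k-1}$ cycles, and why the final count is $(E_{2k}-kE_{2k-1})+E_{2k-1}=E_{2k}-(k-1)E_{2k-1}$. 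With $r\in\{1,2\}$ you cannot reach this formula, which is why your arithmetic never closes; you notice the $E_{2k-1}$ discrepancy but leave it unresolved.

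Second, there is a conceptual confusion in the middle of your argument: you speak of the up-down word $a_1a_2\dots a_{2k}$ ``representing a $2k$-cycle'' versus ``being a product of two or more cycles,'' and of permutations with $\pi_1>\pi_{2k}$ ``splitting off a proper up-down prefix forming its own cycle.'' This conflates one-line notation with cycle notation. The cycle $(a_1,a_2,\dots,a_{2k})$, where $a_1,\dots,a_{2k}$ is any arrangement of $[2k]$, is \emph{always} a single $2k$-cycle; every up-down permutation of $[2k]$ read this way yields a GCUD cyclic permutation, and the only issue is multiplicity of representation. The sign of $a_{2k}-a_1$ governs whether the representation is unique (when $a_{2k}<a_1$, since any rotation would create the pattern $a_{2k}<a_1<a_2$ in consecutive positions) or has multiplicity $k$ (when $a_{2k}>a_1$); it has nothing to do with the cycle decomposing. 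Repairing these two points turns your outline into essentially the paper's proof.
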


\begin{proof}
Let $\gamma\in\S_{2k}$ be a GCUD cyclic permutation, and fix an up-down representation $(a_1,a_2,\dots,a_{2k})$ of $\gamma$.
If $a_{2k}<a_1$, then this is the only possible up-down
representation of $\gamma$, since any other representation would have the consecutive entries $a_{2k}<a_{1}<a_{2}$ (note that it
cannot start with $a_2$ because then the first and second entries would be $a_2>a_3$). By the above argument, there are $E_{2k}-k E_{2k-1}$
GCUD cyclic permutations of this kind.

If $a_{2k}>a_1$, then $\gamma$ has $k$ up-down representations. Picking the representation that starts with a $1$, there are $E_{2k-1}$ choices for
the other entries. In fact, these $E_{2k-1}$ GCUD cyclic permutations $\gamma$ are precisely the CUD cyclic permutations.
Altogether, the number of GCUD cyclic permutations of both kinds is
$E_{2k}-k E_{2k-1}+E_{2k-1}$.
\end{proof}

From Lemma~\ref{lem:gudceven} it is easy to compute the EGF for GCUD cyclic permutations of even length:
\begin{multline*}\sum_{k\ge1} (E_{2k}-(k-1) E_{2k-1})\frac{z^{2k}}{(2k)!}=
\sum_{k\ge1} E_{2k} \frac{z^{2k}}{(2k)!} - \sum_{k\ge1} \frac{1}{2} E_{2k-1}\frac{z^{2k}}{(2k-1)!}+\sum_{k\ge1} E_{2k-1} \frac{z^{2k}}{(2k)!}\\
=\sec z -1-\frac{z}{2}\tan z+\int_0^z\tan u\,du
=\sec z -1-\frac{z}{2}\tan z-\ln(\cos z).\end{multline*}
The beginning of the sequence, starting with the coefficient of $z$, is $0, 1, 0, 3, 0, 29, 0, 569, 0,\dots$. The term $3$, for example, corresponds to $(1,4,2,3)$, $(1,2,3,4)$, $(1,3,2,4)$.
The EGF for GCUD permutations having only even cycles is $$\sec(z)\,\exp\left(\sec z -1-\frac{z}{2}\tan z\right),$$
and the first few terms are $0, 1, 0, 6, 0, 89, 0, 2431, 0,\dots$. The term $6$ corresponds to
$(1,2)(3,4)$, $(1,3)(2,4)$, $(1,4)(2,3)$, $(1,2,3,4)$, $(1,3,2,4)$, $(1,4,2,3)$.

Finally, the EGF for all GCUD permutations is $$\sec(z)\,\exp\left(\sec z -1+(1-\frac{z}{2})\tan z\right).$$
The first few terms are $1, 2, 6, 21, 97, 491, 2989, 19756, 148444,\dots$. The term 21 corresponds to all permutations of $[4]$ except
$(1,2,4,3)$, $(1,3,4,2)$, $(1,4,3,2)$. None of the above three sequences appears currently in~\cite{OEIS}.

\section{Statistics on up-down, CUD, and GCUD permutations}\label{sec:stat}

\subsection{Refined generating functions}

It is possible to refine the above enumerations of CUD and GCUD permutations by adding a variable $t$ that marks the number of cycles
and a variable $x$ that marks the number of fixed points.

For CUD permutations we get the multivariate generating function
$$\sum_{n\ge0}\sum_{\pi\in\CUD_n} x^{\fp(\pi)}t^{c(\pi)}\frac{z^n}{n!}=\exp\left[t\left((x-1)z+\int_0^z E(u)\,du\right)\right]=e^{(x-1)tz}E'(z)^t=\frac{e^{(x-1)tz}}{(1-\sin z)^t}.$$
In particular, the EGF for CUD derangements is
$$\frac{e^{-z}}{1-\sin z},$$
and the first few terms are $0, 1, 1, 5, 15, 71, 341, 1945, 12135,\dots$.
The EGF for CUD permutations with respect to the number of cycles is
\beq\label{eq:cudcycles}(1-\sin z)^{-t}.\eeq
The coefficients of $z^n/n!$ in the expansion of (\ref{eq:cudcycles}) are the
polynomials $c_n$ in~\cite[Eq.~(7.1)]{J}.

It is also easy to keep track of odd and even cycles separately:
\beq\label{eq:cudoe}\sum_{n\ge0}\sum_{\pi\in\CUD_n} t_e^{c_e(\pi)}t_o^{c_o(\pi)}\frac{z^n}{n!}=\exp\left(\int_0^z (t_o\sec u+t_e\tan u)\,du\right)=(\sec z+\tan z)^{t_o}(\sec z)^{t_e}.\eeq

The number of odd cycles and the number of excedances in a permutation $\pi\in\CUD_n$ are related by $$c_o(\pi)+2\exc(\pi)=n,$$
since in each cycle of length $2k$ and each cycle of length $2k+1$, $k$ of the entries are excedances.
Consequently, the bivariate generating function for CUD permutations with respect to the number of excedances is
$$\sum_{n\ge0}\sum_{\pi\in\CUD_n} t^{\exc(\pi)}\frac{z^n}{n!}=\frac{[\sec(z\sqrt{t}) + \tan(z\sqrt{t})]^{1/\sqrt{t}}}{\cos(z\sqrt{t})}.$$

Finally, for GCUD permutations, the multivariate generating function keeping track of fixed points and cycles is
$$\sum_{n\ge0}\sum_{\pi\in\GCUD_n} x^{\fp(\pi)}t^{c(\pi)}\frac{z^n}{n!}=
\sec^t(z)\,\exp\left[t\left((x-1)z+\sec z -1+(1-\frac{z}{2})\tan z\right)\right].$$

\bs

\subsection{Statistics preserved by the bijections}

In the following statements, $f$, $g$, and $\bij$ are the bijections defined in Section~\ref{sec:CUDbij}.

\begin{lemma} Let $\pi\in\UD_{n}$ and let $f(\pi)\in\CUD_n$ be the corresponding CUD permutation with only odd cycles. Then
$$c(f(\pi))=\st(\pi).$$ %=c_o(f(\pi))
\end{lemma}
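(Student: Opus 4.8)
The plan is to track how the bijection $f$ builds up the cycles of $f(\pi)$ from $\pi$, and to match each newly created cycle with one step in the construction of the min-max sequence of $\pi$. Recall that $f$ is defined recursively: if $\pi_k=a_1$ is the smallest entry of $\pi$ (a permutation of $A=\{a_1<\dots<a_n\}$), then the first cycle of $f(\pi)$ is $(\pi_k,\pi_{k-1},\dots,\pi_1)$, and the remaining cycles come from $f(\overline{\pi_{k+1}\cdots\pi_n})$. So $c(f(\pi))=1+c(f(\overline{\pi_{k+1}\cdots\pi_n}))$, and the whole statement will follow by induction on $n$ once I show that the ``$+1$'' is accounted for by exactly one new term of the min-max sequence, and that the min-max sequence of the recursively processed suffix $\overline{\pi_{k+1}\cdots\pi_n}$ has length exactly $\st(\pi)-1$.

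First I would set up the induction and make the statement relative to an arbitrary ground set $A$, since $f$ is defined in that generality; the base case ($n=0$ or $n=1$) is immediate. For the inductive step, let $\pi_k$ be the position of the minimum $a_1$. The first term of the min-max sequence of $\pi$ is $a_1=\pi_k$ itself, and its second term is the largest entry of $\pi$ to the right of position $k$, i.e.\ the largest entry of the suffix $\pi_{k+1}\cdots\pi_n$ — equivalently, since the switch operation reverses the relative order of values, the \emph{smallest} entry of $\overline{\pi_{k+1}\cdots\pi_n}$. In general, the switch $\sigma\mapsto\overline\sigma$ leaves positions fixed and inverts the value order, so it interchanges ``largest entry to the right'' with ``smallest entry to the right'' and vice versa. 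This is precisely the alternation built into the definition of the min-max sequence. Consequently, the min-max sequence of $\pi$, after its first entry $a_1$, continues exactly as the min-max sequence of $\overline{\pi_{k+1}\cdots\pi_n}$ (read with the roles of min and max swapped at the first step, which is exactly what the switch does): formally, $\st(\pi)=1+\st(\overline{\pi_{k+1}\cdots\pi_n})$. I would state this as a small separate observation and verify it directly from the definitions, checking that the alternating min/max pattern is preserved under one application of the switch.

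Combining the two recursions — $c(f(\pi))=1+c(f(\overline{\pi_{k+1}\cdots\pi_n}))$ and $\st(\pi)=1+\st(\overline{\pi_{k+1}\cdots\pi_n})$ — the inductive hypothesis applied to the shorter permutation $\overline{\pi_{k+1}\cdots\pi_n}$ gives $c(f(\overline{\pi_{k+1}\cdots\pi_n}))=\st(\overline{\pi_{k+1}\cdots\pi_n})$, and hence $c(f(\pi))=\st(\pi)$. The main obstacle, and the step deserving the most care, is the bookkeeping in the second observation: one must check that passing from $\pi$ to $\overline{\pi_{k+1}\cdots\pi_n}$ really does ``shift'' the min-max sequence by exactly one, i.e.\ that deleting the prefix $\pi_1\cdots\pi_k$ (which lies entirely to the left of the minimum and therefore contributes nothing to the min-max sequence after the first term) and then switching does not accidentally lengthen or shorten the sequence. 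Once the effect of the switch on ``leftmost extremes to the right'' is pinned down, everything else is routine. It may also be worth illustrating with the running example $\pi=471938562$, where the min-max sequence is $1\,9\,2\,8\,3$ of length $5$, matching $f(\pi)=(1,7,4)(2)(3,8,6,9,5)$ — wait, that has $3$ cycles — so I would instead recompute the correct example to make sure the statistic values agree before including it.
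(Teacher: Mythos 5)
Your proof is correct and follows essentially the same route as the paper: induction on $n$ via the recursions $c(f(\pi))=1+c(f(\overline{\pi_{k+1}\cdots\pi_n}))$ and $\st(\pi)=1+\st(\overline{\pi_{k+1}\cdots\pi_n})$, the latter following from the definition of $\st$ and the fact that the switch exchanges the roles of min and max. (In your aside, the min-max sequence of $471938562$ is actually $1\,9\,2$, of length $3$, since $2$ is both the smallest entry to the right of $9$ and the last entry; this matches the $3$ cycles of $f(\pi)$, so the example does confirm the statement.)
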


\begin{proof}
We proceed by induction on $n$. The result is obviously true for $n=1$.
Recall the recursive definition of $f$: if $\pi\in\UD_n$ and $\pi_k=1$, then
$$f(\pi)=(\pi_k,\pi_{k-1},\dots,\pi_1)\,f(\overline{\pi_{k+1}\pi_{k+2}\dots \pi_n}).$$
Thus, we have that
$$c(f(\pi))=1+c(f(\overline{\pi_{k+1}\pi_{k+2}\dots \pi_n}))=1+\st(\overline{\pi_{k+1}\pi_{k+2}\dots \pi_n})=\st(\pi),$$
where the second equality holds by induction, and the last equality follows from the definition of the statistic $\st$.
\end{proof}

As a consequence of this lemma we obtain the EGF for up-down permutations with respect to the statistic $\st$:
\beq\label{eq:udmm}\sum_{n\ge0}\sum_{\pi\in\UD_n}t^{\st(\pi)}\frac{z^n}{n!}=\sum_{n\ge0}\sum_{\sigma\in\CUD_n}t^{c(\sigma)}\frac{z^n}{n!}=\exp\left(t\,\int_0^z\sec u\,du\right)=(\sec z+\tan z)^t,\eeq
where the second step follows from the analytic proof of Lemma~\ref{lemma:oddCUD}.
The coefficients of $z^n/n!$ in the expansion of (\ref{eq:udmm}) are the
polynomials $a_n$ in~\cite[Eq.~(2.1)]{J}.

\begin{lemma}
Let $\pi\in\UD_{2m}$ and let $g(\pi)\in\CUD_{2m}$ be the corresponding CUD permutation with only even cycles. Then
$$c(g(\pi))=\lrm(\pi).$$ %=c_e(g(\pi))
\end{lemma}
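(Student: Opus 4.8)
The plan is to recall the explicit description of the bijection $g$ from the proof of Lemma~\ref{lemma:evenCUD}: given $\pi=\pi_1\pi_2\dots\pi_{2m}\in\UD_{2m}$ with left-to-right minima $\pi_{i_1}>\pi_{i_2}>\dots>\pi_{i_k}$ (all $i_j$ odd, and $i_1=1$), the permutation $g(\pi)$ is the product of the $k$ cycles $(\pi_{i_j},\dots,\pi_{i_{j+1}-1})$, with the convention $i_{k+1}-1=2m$. So $g(\pi)$ has exactly $k$ cycles, and $k=\lrm(\pi)$ by definition. This makes the statement essentially a matter of unwinding the construction, but I should present it cleanly.

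First I would observe that each "block" $\pi_{i_j}\pi_{i_j+1}\dots\pi_{i_{j+1}-1}$ of the one-line notation of $\pi$ — the maximal stretch between consecutive left-to-right minima — becomes exactly one cycle of $g(\pi)$. Thus the number of cycles of $g(\pi)$ equals the number of such blocks, which is the number of left-to-right minima of $\pi$, i.e. $\lrm(\pi)$. I would note in passing why each block has even length and gives a genuine up-down even cycle in standard form (its first element $\pi_{i_j}$ is the block minimum since the next smaller value $\pi_{i_{j+1}}$ starts the next block, and the alternating pattern of $\pi$ is inherited), but these facts are already established in the proof of Lemma~\ref{lemma:evenCUD} and only the cycle count is needed here.

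Alternatively — and perhaps more in the spirit of the companion lemma just proved for $f$ — I could give a one-line induction: writing $\pi\in\UD_{2m}$ with $\pi_1$ its first entry and $\ell$ the index of the second left-to-right minimum (so $\pi_1=\pi_{i_1}$ and $\pi_\ell$ is the largest entry forced to be smaller than $\pi_1$), the definition of $g$ peels off the first cycle $(\pi_1,\pi_2,\dots,\pi_{\ell-1})$ and then $g$ acts on the switch of the remainder $\pi_\ell\pi_{\ell+1}\dots\pi_{2m}$; since a switch reverses the relative order, the left-to-right minima of the switched suffix correspond to the left-to-right minima of $\pi$ among positions $\ell,\dots,2m$, so $\lrm$ drops by exactly $1$, matching the drop in cycle count. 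One has to check that $g$ is compatible with switching in this recursive sense, but that is immediate from formula~\eqref{eq:g}.

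The main obstacle, such as it is, is purely bookkeeping: being careful that the left-to-right minima of $\pi$ are in \emph{decreasing} value order as listed (so that consecutive ones delimit the cycles correctly) and that the switch operation used in passing between blocks does not disturb which positions are left-to-right minima. There is no analytic or combinatorial difficulty beyond tracking these indices, so I expect the proof to be two or three lines once the correct description of $g$ is on the table.

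\begin{proof}
Recall from the proof of Lemma~\ref{lemma:evenCUD} that if $\pi=\pi_1\pi_2\dots\pi_{2m}\in\UD_{2m}$ has left-to-right minima $\pi_{i_1}>\pi_{i_2}>\dots>\pi_{i_k}$ (with $i_1=1$), then, setting $i_{k+1}-1=2m$,
$$g(\pi)=(\pi_{i_1},\dots,\pi_{i_2-1})(\pi_{i_2},\dots,\pi_{i_3-1})\dots(\pi_{i_k},\dots,\pi_{2m}).$$
Thus $g(\pi)$ is a product of exactly $k$ cycles, one for each maximal block of the one-line notation of $\pi$ lying between two consecutive left-to-right minima. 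Hence $c(g(\pi))=k=\lrm(\pi)$.
\end{proof}
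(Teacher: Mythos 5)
Your proof is correct and takes essentially the same route as the paper, which likewise observes that the cycles of $g(\pi)$ are exactly the blocks opened at each left-to-right minimum of $\pi$, so the cycle count equals $\lrm(\pi)$.
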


\begin{proof}
It follows immediately from the definition of $g$, since the opening parentheses for the cycles of $g(\pi)$ are inserted at each left-to-right minimum of $\pi$.
\end{proof}

\begin{prop}\label{prop:statbij} Let $\pi\in\UD_{n+1}$ and let $\sigma=\bij(\pi)\in\CUD_n$. Then
\begin{eqnarray*} &&c_e(\sigma)=\lrm(\pi)-1,\\
&&c_o(\sigma)=\st(\pi)-1,\\
&&c(\sigma)=\lrm(\pi)+\st(\pi)-2.
\end{eqnarray*}
\end{prop}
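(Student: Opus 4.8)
The plan is to unwind the definition of $\bij$ from the bijective proof of Proposition~\ref{prop:CUD} and track how cycles of each parity are produced. Recall that $\bij(\pi)$ has cycle decomposition $g(\pi'_1\dots\pi'_{k-1})\,f(\ol{\pi'_{k+1}\dots\pi'_{n+1}})$, where $\pi_k=1$ and $\pi'_i=\pi_i-1$. The map $g$ contributes only even cycles and $f$ contributes only odd cycles, so the even cycles of $\sigma$ come entirely from the prefix $\pi'_1\dots\pi'_{k-1}$ and the odd cycles come entirely from the suffix. Hence $c_e(\sigma)=c(g(\pi'_1\dots\pi'_{k-1}))$ and $c_o(\sigma)=c(f(\ol{\pi'_{k+1}\dots\pi'_{n+1}}))$, and the third identity $c(\sigma)=c_e(\sigma)+c_o(\sigma)$ is then immediate from the first two.

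For the even-cycle count, I would invoke the lemma just proved (that $c(g(\tau))=\lrm(\tau)$ for an up-down $\tau$, in its set-version): $c_e(\sigma)=\lrm(\pi'_1\dots\pi'_{k-1})$. Since subtracting $1$ from every entry does not change the relative order, $\lrm(\pi'_1\dots\pi'_{k-1})=\lrm(\pi_1\dots\pi_{k-1})$. Now I must relate this to $\lrm(\pi)$. The entry $\pi_k=1$ is itself an LR minimum of $\pi$ (it is the global minimum), and it is the \emph{last} LR minimum, since nothing after position $k$ can be smaller than $1$. Therefore $\lrm(\pi)=\lrm(\pi_1\dots\pi_{k-1})+1$, giving $c_e(\sigma)=\lrm(\pi)-1$.

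For the odd-cycle count, the analogous lemma gives $c(f(\tau))=\st(\tau)$ for up-down $\tau$, so $c_o(\sigma)=\st(\ol{\pi'_{k+1}\dots\pi'_{n+1}})$. Again the $-1$ shift is order-preserving, so this equals $\st(\ol{\pi_{k+1}\dots\pi_{n+1}})$. The key step is the observation already built into the definition of $\st$: the min-max subsequence of $\pi$ begins $1=\pi_k$, then jumps to the largest entry to the right of position $k$, then to the smallest entry to the right of that, and so on, so after its first term $1$ it coincides (as a sequence of positions, and up to the switch, as a pattern) with the max-min subsequence of $\pi_{k+1}\dots\pi_{n+1}$, which is exactly the min-max subsequence of the switched word $\ol{\pi_{k+1}\dots\pi_{n+1}}$. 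Consequently $\st(\pi)=\st(\ol{\pi_{k+1}\dots\pi_{n+1}})+1$, yielding $c_o(\sigma)=\st(\pi)-1$.

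The main obstacle is the bookkeeping in the last paragraph: one has to check carefully that "switching" $\pi_{k+1}\dots\pi_{n+1}$ turns its min-max subsequence into its max-min subsequence and that this is precisely the tail of the min-max subsequence of $\pi$ after the initial $1$ — in particular that the entries to the right of $\pi_k=1$ are exactly the entries used by $\st(\pi)$ after its first step, with no interference from $\pi_1\dots\pi_{k-1}$ (which holds because every such entry exceeds $1$ and the min-max subsequence, once it has passed $1$, never looks left). Once that is pinned down, everything else is the order-preservation of the shift $\pi\mapsto\pi'$ and the two previously established lemmas, both of which are routine.
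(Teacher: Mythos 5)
Your proof is correct and follows essentially the same route as the paper's: decompose $\sigma=\bij(\pi)$ into the $g$-part (all even cycles) and the $f$-part (all odd cycles), apply the two preceding lemmas $c(g(\tau))=\lrm(\tau)$ and $c(f(\tau))=\st(\tau)$, and account for the $-1$ coming from the removed entry $\pi_k=1$, which is the last LR minimum of $\pi$ and the first term of its min-max sequence. The paper compresses the bookkeeping about the switch and the shift $\pi\mapsto\pi'$ into a chain of equalities, but the content is identical to what you spell out.
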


\begin{proof}
Recall the definition of $\bij$: if $\pi\in\UD_{n+1}$ and $\pi_{k}=1$, then
$$\sigma=\bij(\pi)=g(\pi'_1\dots\pi'_{k-1})f(\overline{\pi'_{k+1}\dots\pi'_{n+1}}),$$
where $\pi'_i=\pi_i-1$ for all $i$.

Since the range of $f$ (resp. $g$) are CUD permutations with only odd (resp. even) cycles, we have that
$$c_e(\sigma)=c(g(\pi'_1\dots\pi'_{k-1}))=\lrm(\pi'_1\dots\pi'_{k-1})=\lrm(\pi_1\dots\pi_{k-1}1)-1=\lrm(\pi)-1$$
and
$$c_o(\sigma)=c(f(\overline{\pi'_{k+1}\dots\pi'_{n+1}}))=\st(\overline{\pi'_{k+1}\dots\pi'_{n+1}})=\st(1\pi_{k+1}\dots\pi_{n+1})-1=\st(\pi)-1.$$
The third equation is obtained by adding the other two.
\end{proof}

We know from equation~(\ref{eq:cudoe}) that the bivariate EGF for CUD permutations where $t$ marks the number of even cycles is
$$G(t,z)=(\sec z+\tan z)(\sec z)^t.$$
It follows from the first part of Proposition~\ref{prop:statbij} that the generating function for up-down permutations with respect to the number of LR minima is
$$\sum_{n\ge1}\sum_{\pi\in\UD_n}t^{\lrm(\pi)}\frac{z^n}{n!}=t\,\int_0^z G(t,u)\,du=t\,\int_0^z (\sec u+\tan u)(\sec u)^t\,du=t\,\int_0^z (\sec u)^{t+1}\,du+(\sec z)^t-1.$$
The coefficients of $z^{2n}/(2n)!$ in the expansion of the last expression are the polynomials $b_n$ in \cite[Eq.~(5.1)]{J}.

Under the bijection $\Jbij$ described at the end of Section~\ref{sec:CUDbij} and adapted from~\cite{J}, the number of cycles corresponds to a different statistic:
\begin{prop}\label{prop:statJbij}
Let $\pi\in\UD_{n+1}$ and let $\Jbij(\pi)\in\CUD_{n}$. Then
$$c(\Jbij(\pi))=\extr(\pi).$$
\end{prop}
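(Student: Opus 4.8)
The plan is to track, step by step through the definition of $\Jbij$, how many cycles get created and show that this count equals the number of extreme elements of the original up-down permutation $\pi\in\UD_{n+1}$. Recall that $\Jbij$ works by repeatedly locating the rightmost extreme element of what remains of a working permutation $\tau$ (initialized to $\pi$ or its switch $\ol\pi$), switching $\tau$ if that element is an LR maximum so that it becomes an LR minimum, peeling off a cycle consisting of that element together with everything to its right, and iterating on the prefix that remains. Each iteration produces exactly one cycle of $\Jbij(\pi)$, so $c(\Jbij(\pi))$ equals the number of iterations. Thus the statement reduces to showing that the number of iterations equals $\extr(\pi)$.

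First I would set up the bookkeeping: say the rightmost extreme element at the start of iteration $j$ sits in position $k_j$ of the current working permutation, so that iteration $j$ removes positions $k_j, k_j+1,\dots,$ up to the previous cut point, and the next iteration operates on the prefix of length $k_j-1$. The process stops when only the single entry $n+1$ (the global maximum) remains. Since $n+1$ is always the last extreme element we ever remove (it is an LR maximum of any prefix containing it, hence extreme, at the moment it becomes the rightmost extreme element of what remains), I would argue that the positions $k_1 > k_2 > \dots$ at which cuts are made are exactly the positions of the extreme elements of $\pi$, read from right to left. The key observation is switching-invariance of the set of extreme \emph{positions}: applying a switch $\sigma\mapsto\ol\sigma$ turns LR minima into LR maxima and vice versa, but keeps the set of positions that host an extreme element unchanged (this is essentially the content of the remark in Section~\ref{sec:intro} that an extreme element is one that is either an LR minimum or an LR maximum, a notion stable under switching). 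Moreover, restricting to a prefix $\tau_1\dots\tau_{k-1}$ does not change which of those positions are extreme, since being an LR minimum or maximum only depends on the entries to the left. Therefore the positions peeled off over the course of the algorithm are precisely the positions $i\ge 2$ of $\pi$ at which $\pi_i$ is extreme, together with position $1$ at the very end (the entry $1$ or $n+1$, which by convention is not counted in $\extr$, but corresponds to the final trivial step that leaves a single entry and produces no additional cycle beyond what the last genuine extreme element already produced).

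I would then do an induction on $n$ to make this rigorous: after the first cycle is removed, the remaining prefix $\tau_1\dots\tau_{k_1-1}$ (possibly after a switch) is, up to relabeling its entries to an interval, an up-down or down-up permutation whose switch is up-down; its extreme elements are exactly the extreme elements of $\pi$ lying strictly to the left of position $k_1$, and the algorithm continues on it exactly as $\Jbij$ would on this smaller permutation. By the inductive hypothesis the remainder of the process produces $\extr$ of that prefix many cycles, and adding the one cycle from the first step gives the total. The main obstacle, and the step deserving the most care, is the combinatorial claim that the rightmost extreme element of the current working permutation, after the prescribed switches, is always genuinely extreme in $\pi$ and that no extreme position is ever skipped or double-counted — in other words, that the cut positions are in bijection with $\{i : 2\le i\le n+1,\ \pi_i \text{ is an LR min or LR max of }\pi\}$. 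This requires checking that switching a prefix does not create new extreme positions outside the current prefix and does not destroy extreme positions inside it, and that the ``rightmost extreme element'' of a proper prefix of $\pi$ is the next extreme position of $\pi$ to the left of the previous cut; once that is established the cycle count follows immediately.
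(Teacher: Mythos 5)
Your proposal is correct and rests on exactly the same two observations as the paper's (much terser) proof: the set of positions hosting extreme elements is invariant under the switches performed during the construction of $\Jbij$ (and under passing to a prefix, since extremality only depends on entries to the left), and each iteration peels off one cycle at the rightmost such position. The inductive scaffolding you add is a legitimate way to make this rigorous, but it is the same argument.
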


\begin{proof}
The positions of the extreme elements of $\pi\in\UD_{n+1}$ are not affected by the switches that take place in the construction of $\Jbij(\pi)$.
Since a new cycle of $\Jbij(\pi)$ is created for each extreme element of $\pi$, the result follows.
\end{proof}

From the above proposition and equation~(\ref{eq:cudcycles}), it follows that the EGF for up-down permutations with respect to the number of extreme elements is
$$\sum_{n\ge1}\sum_{\pi\in\UD_n}t^{\extr(\pi)}\frac{z^n}{n!}=\int_0^z (1 - \sin u)^{-t}\,du.$$
It also follows from Propositions~\ref{prop:statbij} and~\ref{prop:statJbij} that the statistics $\extr$ and $\lrm+\st-2$ are equidistributed on up-down permutations.

\section{Statistics on all permutations}\label{sec:statall}

Some of the above statistics have an interesting distribution not only in $\UD_n$ but also in $\S_n$.
Let $c(n,k)$ denote the signless Stirling numbers of the first kind. It is well known that
$$|\{\pi\in\S_n:\lrm(\pi)=k\}|=|\{\pi\in\S_n:c(\pi)=k\}|=c(n,k).$$

\begin{lemma}\label{lem:stsn} For $n\ge k\ge1$,
$$|\{\pi\in\S_n:\st(\pi)=k\}|=c(n,k).$$
\end{lemma}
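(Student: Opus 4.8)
The plan is to set up a bijection between permutations of $[n]$ with $\st(\pi)=k$ and permutations of $[n]$ with $c(\pi)=k$, which by the classical fact $|\{\pi\in\S_n:c(\pi)=k\}|=c(n,k)$ would give the result. The natural candidate is a decomposition of $\pi$ along its min-max sequence, reminiscent of the bijection $f$ used in the proof of Lemma~\ref{lemma:oddCUD}. Specifically, given $\pi=\pi_1\dots\pi_n$, one reads off the min-max subsequence $\pi_{i_1}\pi_{i_2}\dots\pi_{i_k}$ with $\pi_{i_1}=1$, and cuts the one-line notation into blocks at the positions $i_1<i_2<\dots<i_k$; each block becomes a cycle (after a suitable normalization), and the number of blocks is exactly $\st(\pi)=k$, which matches the desired number of cycles.

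First I would make the cutting precise: for $1\le j\le k$, let $B_j=\pi_{i_j}\pi_{i_j+1}\dots\pi_{i_{j+1}-1}$ (with $i_{k+1}=n+1$), so $\pi=B_1B_2\dots B_k$. The first entry of $B_j$ is alternately a running minimum (for $j$ odd) or a running maximum (for $j$ even) of the suffix it opens. Next I would apply the switch operation $\sigma\mapsto\ol\sigma$ to the even-indexed blocks (those opened by a maximum) so that, as in the analysis of $f$, every block starts with the smallest element of the set of values it contains; then I would reinterpret each normalized block as a cycle in standard form. To get a well-defined permutation of $[n]$ and not of a shifting set of labels, I would instead phrase the map purely combinatorially: the blocks $B_1,\dots,B_k$ partition the positions $[n]$, hence also, via the value set of each block, partition $[n]$ into $k$ parts; declaring each part to be the support of a cycle and using the (switched) block to prescribe the cyclic order yields $\Jbij(\pi)$-style output with exactly $k$ cycles. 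The inverse is obtained by writing a permutation as a product of cycles in standard form, ordering them by their smallest elements $1=c_1<c_2<\dots<c_k$, switching the cycles in alternating fashion, and concatenating — the point being that this recovers the min-max subsequence $c_1,c_2,\dots,c_k$ and hence has $\st=k$.

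The main obstacle I expect is the bookkeeping that makes this a genuine bijection on $\S_n$: one must check that after the alternating switches the blocks do reassemble into a permutation whose min-max subsequence has length exactly $k$, and conversely that an arbitrary permutation with $k$ cycles, written in the ordered standard form above and switched alternately, produces a one-line word whose cut-points are precisely its running min/max positions. In particular I must verify that the smallest element of the $j$-th cycle is a running minimum (if $j$ is odd) or a running maximum (if $j$ is even) of the corresponding suffix, so that the min-max reading is forced to pick out exactly $c_1,\dots,c_k$ and no additional terms. This parallels the verification already done implicitly for $f$ and for $\Jbij$, so the argument is routine once the normalization conventions are fixed; an alternative, if the direct bijection proves cumbersome, is an EGF argument: show that CUD permutations with only odd cycles, refined by cycle number, have EGF $(\sec z+\tan z)^t$ (equation~(\ref{eq:udmm})), and separately that $\sum_{n\ge0}\sum_{\pi\in\S_n}t^{\st(\pi)}z^n/n!$ satisfies the same functional relation, but the bijective route is cleaner and more in the spirit of the paper.
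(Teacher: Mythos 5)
There is a genuine gap here: your blocks do not cover the permutation, and the natural repair destroys injectivity. The min-max subsequence begins at the position $i_1$ of the value $1$, which need not equal $1$; your definition $B_j=\pi_{i_j}\pi_{i_j+1}\dots\pi_{i_{j+1}-1}$ therefore discards the prefix $\pi_1\dots\pi_{i_1-1}$, so the claimed identity $\pi=B_1B_2\cdots B_k$ is false in general and the output is not a permutation of $[n]$. For instance, for $\pi=213$ we have $i_1=2$, $i_2=3$, the blocks are $1$ and $3$, and the value $2$ disappears. If you instead absorb the prefix into the first block, that block no longer starts at its minimum, and reading it as a cyclic word forgets where $1$ sat inside it: for $n=3$ both $123$ and $213$ would map to $(1,2)(3)$, while $312$ maps to $(1,3)(2)$ and $(1)(2,3)$ is never reached. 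The Foata-type cutting that underlies $g$ works precisely because its cut points are LR minima, which are decreasing and each is the minimum of everything to its right; the min-max positions have neither property, so the ``cut at the min-max positions and read off cycles'' map is not a bijection onto permutations with $k$ cycles.

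The paper sidesteps this by never converting to cycles: it defines a word-level bijection $h:\S_n\rightarrow\S_n$ which, for $j=1,\dots,k-1$, switches the suffix $\tau_{i_j+1}\dots\tau_n$ and finally reverses the whole word; this carries the min-max subsequence of $\pi$ onto the sequence of LR minima of $h(\pi)$, after which one invokes the classical fact $|\{\pi\in\S_n:\lrm(\pi)=k\}|=c(n,k)$. Your alternating-switch idea is the right ingredient, but it must be applied to suffixes of the intact word rather than to detached blocks that are then collapsed to cycles. Note also that the fallback you sketch does not apply as stated: equation~(\ref{eq:udmm}) records the distribution of $\st$ over $\UD_n$, not over $\S_n$; for the lemma one would instead need $\sum_{n}\sum_{\pi\in\S_n}t^{\st(\pi)}z^n/n!=(1-z)^{-t}$, and you give no argument for that identity.
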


\begin{proof}
We define a bijection $h:\S_n\rightarrow\S_n$ that transforms the min-max sequence of a permutation $\pi$ into the sequence of left-to-right minima of $h(\pi)$.
Given $\pi\in\S_n$, let $\pi_{i_1}\pi_{i_2}\dots\pi_{i_k}$ be the min-max sequence of $\pi$.
Starting with $\tau=\pi$, for $j=1,2,\dots,k-1$ switch the entries $\tau_{i_j+1}\tau_{i_j+2}\dots\tau_n$ of $\tau$.
Finally, if $\tau_1\tau_2\dots\tau_n$ is the resulting permutation, let $h(\pi)=\tau_n\dots\tau_2\tau_1$.

For example, if $\pi=48{\bf 1}2{\bf 7}6{\bf 35}$ (the min-max sequence is in boldface), we first switch
the entries $27635$, getting $\tau=48{\bf1}7{\bf2}3{\bf65}$, then switch the entries $365$, getting $\tau=48{\bf1}7{\bf2}6{\bf35}$. The switch of the $5$ at the end does not change the permutation,
so we obtain $h(\pi)={\bf 53}6{\bf 2}7{\bf 1}84$, where now the elements in boldface are the LR minima.
\end{proof}

In fact, the following generalization of Lemma~\ref{lem:stsn} holds as well. For any infinite sequence $s=s_1s_2s_3\dots$, where each $s_i\in\{\min,\max\}$, define the
statistic $m_s$ on permutations $\pi\in\S_n$ as the length of the subsequence $\pi_{i_1}\pi_{i_2}\dots\pi_{i_k}$ where $\pi_{i_1}=s_1(\pi_1\pi_2\dots\pi_n)$,
$\pi_{i_2}=s_2(\pi_{i_1+1}\pi_{i_1+2}\dots\pi_n)$, and in general
$\pi_{i_j}=s_j(\pi_{i_{j-1}+1}\pi_{i_{j-1}+2}\dots\pi_n)$ for each $j$ until $\pi_{i_k}=\pi_n$ for some $k$. For example, if $s=\min\min\dots$, then $m_s(\pi)=\lrm(\pi)$, and if
$s=\min\max\min\max\dots$, then $m_s(\pi)=\st(\pi)$. Then, for any sequence $s$ as above, we have
$$|\{\pi\in\S_n:m_s(\pi)=k\}|=c(n,k).$$
The bijection from Lemma~\ref{lem:stsn} can be easily generalized to prove this fact if we start with $\tau=\pi$ or $\tau=\ol{\pi}$ depending on whether $s_1$ equals $\min$ or $\max$, respectively, and
then for each $j=1,2,\dots,k-1$ we switch the entries $\tau_{i_j+1}\tau_{i_j+2}\dots\tau_n$ only if $s_j\neq s_{j+1}$.

\begin{prop} For $n,k\ge1$ with $n\ge k+1$, $$|\{\pi\in\S_n:\extr(\pi)=k\}|=2^k c(n-1,k).$$
\end{prop}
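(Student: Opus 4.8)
The plan is to set up a bijection between $\{\pi\in\S_n:\extr(\pi)=k\}$ and the set of pairs consisting of a permutation counted by $c(n-1,k)$ together with a binary choice at each of the $k$ extreme positions; this accounts for the factor $2^k$. First I would note that the first entry $\pi_1$ is never an extreme element (by the convention $i\ge2$), so the extreme elements occur among positions $2,3,\dots,n$. The natural strategy is to read $\pi$ from right to left, or equivalently to track the nested interval $[\ell,r]$ of ``still available'' values: after processing the LR minima and LR maxima from the left, each extreme element is either the current minimum or the current maximum of the remaining values, and each such choice contributes a factor of $2$. The non-extreme positions, by contrast, are forced to lie strictly between the current running minimum and running maximum.

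Concretely, I would proceed by induction on $n$, or better, by a direct decomposition. Given $\pi\in\S_n$, the value $1$ is always an LR minimum and hence (if it is not in position $1$) an extreme element, and the value $n$ is always an LR maximum and hence an extreme element unless it is in position $1$. The cleanest route is to remove the entry $\pi_1$: define $\pi'\in\S_{n-1}$ to be the standardization of $\pi_2\pi_3\dots\pi_n$. One checks that $\extr(\pi)$ equals the number of extreme elements of $\pi_2\dots\pi_n$ as a sequence \emph{relative to the prefix including $\pi_1$}, and the point is that whether each of these records is a minimum-record or a maximum-record can be toggled independently. More precisely, I expect the right statement to be: the map sending $\pi$ to the pair $(\pi',\varepsilon)$, where $\varepsilon\in\{0,1\}^k$ records for each extreme element whether it is an LR minimum or an LR maximum of $\pi$, is a bijection onto pairs where $\pi'$ ranges over a set enumerated by $c(n-1,k)$ (namely, using the known fact $|\{\pi'\in\S_{n-1}:\lrm(\pi')=k\}|=c(n-1,k)$, one sends the positions of the extreme elements of $\pi$ to the positions of the LR minima of $\pi'$).

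The key steps, in order, are: (i) record the positions $2\le j_1<j_2<\dots<j_k\le n$ of the extreme elements of $\pi$, together with the sign vector $\varepsilon\in\{0,1\}^k$; (ii) show that the values at the non-extreme positions, and at $\pi_1$, are completely determined by the requirement that position $j$ is extreme iff $j\in\{j_1,\dots,j_k\}$ together with $\varepsilon$ — intuitively, once we know which positions are record-setting and of which type, the remaining entries must fill in the interior of the shrinking available interval and there is exactly one order-isomorphic way to do this for each choice of the underlying ``shape''; (iii) check that the positions $(j_1,\dots,j_k)$ that can arise, together with the underlying pattern of the non-extreme entries, are in bijection with permutations in $\S_{n-1}$ having exactly $k$ LR minima, giving the count $c(n-1,k)$; (iv) conclude $|\{\pi\in\S_n:\extr(\pi)=k\}|=2^k\,c(n-1,k)$.

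The main obstacle I anticipate is step (ii)–(iii): making precise the claim that ``the signs can be chosen independently and the rest is forced,'' i.e.\ that switching the type of one extreme element (min $\leftrightarrow$ max) while keeping all positions of extreme elements fixed is always realizable by a unique permutation, without accidentally creating or destroying an extreme element elsewhere. The clean way to handle this is to process positions $2,3,\dots,n$ left to right, maintaining the interval $[\ell_j,r_j]$ of values not yet used; at an extreme position we place $\ell_j$ or $r_j$ according to $\varepsilon$ (shrinking the interval by one at the appropriate end), and at a non-extreme position we must place some value strictly inside $(\ell_j,r_j)$ — and the relative order of these interior choices is exactly the data of a permutation with the prescribed LR-minima positions. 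One then verifies that $\pi_1$ is whatever single value remains, and that this construction is invertible. Alternatively, one can invoke the generalization of Lemma~\ref{lem:stsn} (the $m_s$ statistics) applied cleverly, but the interval bookkeeping is the most transparent.
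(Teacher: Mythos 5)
Your high-level strategy --- a bijection between $\{\sigma\in\S_n:\extr(\sigma)=k\}$ and pairs $(\pi,\varepsilon)$ with $\pi\in\S_{n-1}$, $\lrm(\pi)=k$ and $\varepsilon\in\{0,1\}^k$ recording the min/max type of each extreme element --- is exactly the paper's, and you correctly isolate the crux: one must show that the $k$ types can be toggled independently without creating or destroying records elsewhere. But the mechanism you propose for this step is wrong. An extreme element is a record \emph{relative to the prefix already placed}, not the minimum or maximum of the values not yet used, and the two notions differ: in $\sigma=3241$ the entry $\sigma_2=2$ is an LR minimum (hence extreme), yet the unused values at that moment are $\{1,2,4\}$, of which $2$ is neither the smallest nor the largest (nor is that set an interval). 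So the rule ``at an extreme position place $\ell_j$ or $r_j$ and shrink the interval at that end'' neither generates all permutations with $\extr=k$ nor inverts. There is also a circularity: you process positions $2,\dots,n$ left to right and leave $\sigma_1$ to be ``whatever remains,'' but whether position $2$ is extreme depends on $\sigma_1$. Finally, your first formulation of step (ii) (the non-extreme values are ``completely determined'') is false, as you implicitly retract in step (iii), and taking $\pi$ to be the standardization of $\sigma_2\cdots\sigma_n$ does not work either: $\sigma=213$ has two extreme elements while the standardization of $13$ has one LR minimum.

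The paper closes precisely this gap with the switch operation $\tau\mapsto\ol{\tau}$ applied to prefixes: starting from $\tau=n\pi_1\cdots\pi_{n-1}$, whose extreme elements are exactly the LR minima of $\pi$, one switches the prefix ending at the $j$th record whenever $s_j\neq s_{j+1}$. A prefix switch rearranges the same set of values, so it exchanges min-records and max-records inside the prefix while leaving the record structure to its right untouched --- exactly the ``realizable by a unique permutation'' property you identified as the obstacle. Alternatively, your passing remark about reading from the right can be made into a complete and very short proof: appending a final entry to a permutation of length $j-1$ offers $j$ relative values, of which exactly two (new minimum, new maximum) create an extreme element and the rest create none, so
$$\sum_{\sigma\in\S_n}t^{\extr(\sigma)}=2t(2t+1)\cdots(2t+n-2)=\sum_k 2^k\,c(n-1,k)\,t^k.$$
Either repair works; as written, your argument has a genuine gap at its central step.
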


\begin{proof}
We give a bijection $\ell$ between the sets
$$\{(\pi,s): \pi\in\S_{n-1}, \lrm(\pi)=k, s\in\{0,1\}^k\}$$
and $$\{\sigma\in\S_n: \extr(\sigma)=k\}.$$

Given a pair $(\pi,s)$ in the first set, the digit $s_j$ being $0$ or $1$ will indicate whether the $j$th LR minimum of $\pi$ will become an LR minimum or an LR maximum of $\ell(\pi,s)$, respectively.
Let $\pi_{i_1}>\pi_{i_2}>\dots>\pi_{i_k}$ be the LR minima of $\pi$.
Define $s_{k+1}=0$ for convenience.
Let $\tau=\tau_0\tau_1\dots\tau_{n-1}=n\pi_1\dots\pi_{n-1}$.
For $j=k,k-1,\dots,1$, switch the prefix $\tau_0\tau_1\dots\tau_{i_j}$ of $\tau$ if $s_j\neq s_{j+1}$.
Let $\sigma=\ell(\pi,s)$ be the permutation $\tau$ obtained at the end.
It is clear from the construction that the positions of the extreme elements of $\sigma$ are the same as the LR minima of $\pi$.

For example, if $(\pi,s)=({\bf 86}7{\bf 42}5{\bf 1}3,10011)$ (the LR minima of $\pi$ are in boldface), we start with
$\tau=9{\bf86}7{\bf42}5{\bf1}3$.
Since $s_5=1\neq0=s_6$, we first switch the prefix $9867451$, getting $\tau=1{\bf25}4{\bf78}6{\bf9}3$. Now $s_4=1=s_5$, so no switches are done for $j=4$.
Since $s_3=0\neq1=s_4$, we now switch the prefix $12547$, getting $\tau=7{\bf52}4{\bf18}6{\bf9}3$. The last switch happens for $j=1$ because $s_1\neq s_2$,
ending with $\sigma=\ell(\pi,s)=\tau=5{\bf72}4{\bf18}6{\bf9}3$.

The inverse map has a similar description. Given $\sigma\in\S_n$ with $k$ extreme elements, we have $\ell^{-1}(\sigma)=(\pi,s)$ where, for $1\le j\le k$,
$s_j$ is $0$ or $1$ depending on whether the $j$th extreme element of $\sigma$ is an LR minimum or an LR maximum, respectively.
To obtain $\pi$, let $\sigma_{i_1},\sigma_{i_2},\dots,\sigma_{i_k}$ be the extreme elements of $\sigma$ from left to right.
For $j=k,k-1,\dots,1$, switch the first $i_j$ entries of $\sigma$ if $s_j\neq s_{j+1}$, where again we define $s_{k+1}=0$ for convenience.
Finally, we recover $\pi$ by deleting the first entry, which is necessarily $n$.
\end{proof}

Let us now study the distribution of two more statistics on permutations.
For $\pi\in\S_n$, let $\ud(\pi)$ be the number of up-down cycles in $\pi$, and let
$\nud(\pi)=c(\pi)-\ud(\pi)$ be the number of cycles of $\pi$ that are not up-down. Using equation~(\ref{eq:udcycles}) and the fact that
the EGF for all cycles is $$\sum_{n\ge1}(n-1)!\,\frac{z^n}{n!}=-\ln(1-z),$$
we can derive the EGF for all permutations with respect to the number of up-down and non-up-down cycles:
\begin{multline}\label{eq:udnud}
\sum_{n\ge0}\sum_{\pi\in\S_n} v^{\ud(\pi)}w^{\nud(\pi)}\frac{z^n}{n!}=
\exp\left(-w\ln(1-z)+(v-w)\int_0^z E(u)\,du\right)\\=
\frac{E'(z)^{v-w}}{(1-z)^w}=\frac{1}{(1-z)^w(1-\sin z)^{v-w}}.\end{multline}

It is now straightforward to obtain an expression for the expected number of up-down cycles in a random permutation.
Plugging $w=1$ into~(\ref{eq:udnud}) we get $$\frac{(1-\sin z)^{1-v}}{1-z},$$ the EGF for permutations where $v$ marks
the number of up-down cycles. Its derivative with respect to $v$, evaluated at $v=1$, is
\beq\label{eq:avgud}-\frac{\ln(1-\sin z)}{1-z}=\left(\sum_{i\ge1} E_{i-1}\frac{z^i}{i!}\right)\left(\sum_{j\ge0} z^j\right).\eeq
The expected number of up-down cycles in a random permutation of $[n]$ is now the coefficient of $z^n$ in~(\ref{eq:avgud}), which equals
\beq\label{eq:avgudcoef}\frac{E_0}{1!}+\frac{E_1}{2!}+\dots+\frac{E_{n-1}}{n!}.\eeq
This formula can also be obtained directly using the well-known fact that the expected number of cycles of length $k$ in a random permutation of
$[n]$ is $1/k$, for all $1\le k\le n$. For each one of these cycles, the probability that it is up-down is
$E_{k-1}/(k-1)!$, since there are $E_{k-1}$ up-down cycles of length $k$. Thus,
the expected number of up-down cycles of length $k$ in a random permutation of $[n]$ is $E_{k-1}/k!$.
Adding up for $1\le k\le n$ we obtain~(\ref{eq:avgudcoef}).  For large $n$, this number approaches
$$-\ln(1 - \sin 1) = 1.841817641\dots.$$

Another consequence of equation~(\ref{eq:udnud}) is a formula for the number $r_n$ of permutations of $[n]$ having no up-down cycles.
The EGF for these permutations, obtained by setting $v=0$ and $w=1$ in~(\ref{eq:udnud}), is
$$\frac{1 - \sin z}{1-z}=\left(1+\sum_{i\ge1} (-1)^i\frac{z^{2i-1}}{(2i-1)!}\right)\left(\sum_{j\ge0} z^j\right).$$
It follows that
$$\frac{r_{2m-1}}{(2m-1)!}=\frac{r_{2m}}{(2m)!}=\frac{1}{3!}-\frac{1}{5!}+\frac{1}{7!}-\dots+\frac{(-1)^m}{(2m-1)!}.$$
The limiting value of this expression, which is the probability that a random permutation of length approaching infinity has no up-down cycles,
is $1 - \sin 1 = 0.1585290152\dots$.

\section{Another interpretation of CUD permutations}\label{sec:interpret}

A permutation of $[n]$ can be represented as a directed graph on $n$ vertices labeled $1,2,\dots,n$, with an edge from $i$ to $j$ if and only if $\pi(i)=j$.
Consider the following way of drawing the graph. Put the $n$ vertices on a horizontal line, ordered from left to right by increasing label. If $\pi(i)=j>i$ (resp. $\pi(i)=j<i$),
then draw a red (resp. blue) arc between $i$ to $j$ above (resp. below) the horizontal line.
Note that the orientation of the arcs is implicit, so we can draw undirected arcs (see Figure~\ref{fig:CUDperm}).

\begin{figure}[hbt]
\bce\epsfig{file=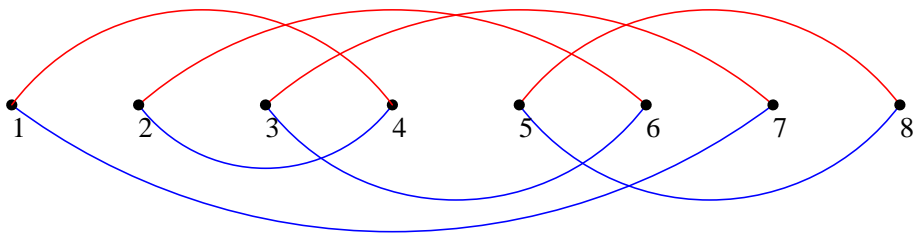,height=3cm} \caption{The CUD permutation $(1,4,2,6,3,7)(5,8)$. }\label{fig:CUDperm}\ece
\end{figure}

Using this drawing, CUD permutations having only even cycles correspond precisely to those graphs with the property that each vertex is of one of two types:
it is incident to a red and a blue edge that connect it to vertices to its left, or it is incident to a red and a blue edge that connect it to vertices to its right.
Equivalently, the edges of each color form a perfect matching of the vertices, and the two matchings agree on what vertices are {\em opening vertices} (matched with a vertex to their right)
or {\em closing vertices} (matched with a vertex to their left).

Since the number of CUD permutations of $[2m]$ having only even cycles is $E_{2m}$, as shown in Lemma~\ref{lemma:evenCUD}, this representation as a pair of ``matching" perfect matchings
allows us to recover the continued fraction for the secant numbers (see~\cite[p.~145]{Fla}):
$$\sum_{m\ge0} E_{2m} z^m=\dfrac{1}{1-\dfrac{z}{1-\dfrac{2^2z}{1-\dfrac{3^2z}{1-\dfrac{4^2z}{\dots}}}}}.$$

\end{document}